\def\thm@space@setup{%
  \thm@preskip=\parskip \thm@postskip=0pt
}
\newtheorem{Theorem}{Theorem}[section]
\newtheorem*{Theo*}{Theorem}
\newtheorem{Lem}[Theorem]{Lemma}
\newtheorem{Prop}[Theorem]{Proposition}
\newtheorem{Cor}[Theorem]{Corollary}
\theoremstyle{definition}
\newtheorem{Def}[Theorem]{Definition}
\newtheorem{Rem}[Theorem]{Remark}
\DeclareMathOperator{\rd}{\mathrm{d}}
\DeclareMathOperator{\graph}{\mathrm{Graph}}
\DeclareMathOperator{\mult}{\mathrm{mult}}
\DeclareMathOperator{\C}{\mathbb{C}}
\DeclareMathOperator{\R}{\mathbb{R}}
\DeclareMathOperator{\ssl}{\mathfrak{s}\mathfrak{l}}
\DeclareMathOperator{\su}{\mathfrak{s}\mathfrak{u}}
\DeclareMathOperator{\g}{\mathfrak{g}}
\DeclareMathOperator{\Alt}{\mathrm{Alt}}
\DeclareMathOperator{\id}{\mathrm{id}}
\DeclareMathOperator{\ee}{\mathfrak{e}}
\DeclareMathOperator{\bb}{\mathfrak{b}}
\DeclareMathOperator{\Bb}{\mathfrak{B}}
\DeclareMathOperator{\ext}{\mathrm{ext}}
\DeclareMathOperator{\Ree}{\mathrm{Re}}
\DeclareMathOperator{\Imm}{\mathrm{Im}}
\DeclareMathOperator{\Arg}{\mathrm{Arg}}
\date{}
\numberwithin{equation}{section}
\begin{document}
\title{Poisson-Lie groupoids and the contraction procedure}
\author{Kenny De Commer\thanks{Department of Mathematics, Vrije Universiteit Brussel, VUB, B-1050 Brussels, Belgium, email: {\tt kenny.de.commer@vub.ac.be}. This work was partially supported by NCN grant 2012/06/M/ST1/00169 and FWO grant G.0251.15N.} }
\maketitle

\begin{abstract}
\noindent On the level of Lie algebras, the contraction procedure is a method to create a new Lie algebra from a given Lie algebra by rescaling generators and letting the scaling parameter tend to zero. One of the most well-known examples is the contraction from $\su(2)$ to $\ee(2)$, the Lie algebra of upper-triangular matrices with zero trace and purely imaginary diagonal. In this paper, we will consider an extension of this contraction by taking also into consideration the natural bialgebra structures on these Lie algebras. This will give a bundle of central extensions of the above Lie algebras with a Lie bialgebroid structure having transversal component. We consider as well the dual Lie bialgebroid, which is in a sense easier to understand, and whose integration can be explicitly presented. 
\end{abstract}

\section*{Introduction}

The \emph{contraction procedure} allows one to see a fixed Lie group as living in a bundle of Lie groups having a non-trivial, degenerate limit at some boundary point. The most well-known example is the contraction of $SU(2)$ to $E(2)$, the group of upper-triangular matrices with determinant one and unimodular diagonal, by means of the intermediary groups \[SU(2)_y = \{\begin{pmatrix} a & b \\ -y\bar{b} & \bar{a}\end{pmatrix} \mid |a|^2+y|b|^2 = 1\}\subseteq SL(2,\C).\] See for example \cite[Chapter 10]{Gil94} for a general discussion on the contraction procedure. 

The contraction method also works well on the level of quantum groups, see \cite{Wor92} for the contraction of quantum $SU(2)$ to quantum $E(2)$. However, something remarkable happens in the quantum case: instead of a limit, one can also take a direct `jump' from $SU(2)$ to $E(2)$ by means of the \emph{reflection procedure} \cite{DeC12}. On the level of the associated quantized universal enveloping algebras, this is explained by the fact that not only can one deform $U_q(\su(2))$ to $U_q(\ee(2))$ by a limit of Hopf algebras $U_q(\su(2))_y$, but this family can even be extended to a two-parameter family of algebras $U_q(\su(2))_{x,y}$ which now no longer have an internal comultiplication, but a compatible three-parameter family of `\emph{external}' comultiplications turning the family into a \emph{cogroupoid} \cite{BiC14}. Informally, the dual of $U_q(\su(2))_{0,y}$ is then the object allowing one to `jump' from $SU_q(2)$ to $E_q(2)$. 

The goal of this article is to describe a semi-classical analogue of this observation. Namely, motivated by the fact that $U_q(\su(2))$ is a deformation of the function algebra on the Poisson-Lie dual $B$ of $SU(2)$, we show that $B$ can be endowed with a two-parameter family of Poisson structures which are \emph{Poisson bitorsors}. Globally, this turns the trivial product groupoid $\R\times B\times \R$ into a \emph{Poisson groupoid}. The associated infinitesimal object is then a Lie bialgebroid over $\R$. Taking the dual Lie bialgebroid then gives, rather surprisingly, a bundle of \emph{four-dimensional} Lie algebras forming a central extension of the bundle contracting $\su(2)$ to $\ee(2)$. Moreover, the integrated Poisson groupoid is then a bundle of 4-dimensional Lie groups but with a transversal Poisson structure. The difficulty to give a full description of this integrated Poisson groupoid reflects in a way the subtle analytic behaviour appearing in the quantum context \cite{DeC12}.

The article is structured as follows. In a \emph{first section}, we give some preliminaries on the contraction procedure and on Lie bialgebras. In the \emph{second section}, we describe the Lie bialgebroid associated to the two-parameter family of Poisson structures on $B$, as well as the dual Lie bialgebroid which, as an algebroid, is bundle of 4-dimensional Lie algebras. In the \emph{third section}, we then consider the integrated Poisson groupoids of these Lie bialgebroids.

\section{Dual pairs of Lie bialgebras}

Consider the real Lie algebra $\su(2)$, generated by three elements $J_1,J_2,J_3$ such that, taking indices $\mathrm{mod}$ 3, \[\lbrack J_i, J_{i+1}\rbrack  = 2J_{i+2}.\] When considering matrix realisations, we will use the identification \[J_1 = \begin{pmatrix} i &0 \\ 0 & -i\end{pmatrix},\quad J_2 = \begin{pmatrix} 0 & 1 \\ -1 & 0\end{pmatrix},\quad J_3 = \begin{pmatrix} 0 & i \\ i & 0\end{pmatrix}.\]

Recall now the notion of a \emph{Lie bialgebra}, for which we will take \cite{ES02} as our main reference. We will use here the natural extension of the Lie bracket on $\g$ to a Gerstenhaber algebra structure on $\Lambda \g$, so that for example \[\lbrack a,b\wedge c\rbrack = \lbrack a,b\rbrack \wedge c +b\wedge \lbrack a,c\rbrack.\] We will also identify $\Lambda^n(\g) \subseteq \g^{\otimes n}$ by the map \[a_1\wedge\ldots \wedge a_n\mapsto \sum_{\sigma\in S_n} (-1)^{\sigma}a_{\sigma(1)}\otimes \ldots \otimes a_{\sigma(n)}.\] 

\begin{Def} A \emph{Lie bialgebra} consists of a Lie algebra $\g$ together with a linear map \[\delta: \g\rightarrow \Lambda^2 \g,\] called the \emph{cobracket}, satisfying the cocycle identity \[\delta(\lbrack a,b\rbrack) = \lbrack \delta(a),b\rbrack + \lbrack a,\delta(b)\rbrack,\qquad \forall a,b\in \g,\] and the \emph{coJacobi identity} \[\Alt (\delta\otimes \id)\delta(a) = 0,\qquad \forall a\in \g,\] where $\Alt(a_0\otimes a_1\otimes a_2) = \sum_{k=0}^2 a_{k}\otimes a_{k+1}\otimes a_{k+2}$, the indices treated cyclically.
\end{Def} 

For example, it is easy to check that $\su(2)$ becomes a bialgebra by the cobracket \[\delta(J_1) =0, \qquad \delta(J_2) = J_1\wedge J_2,\quad \delta(J_3) = J_1 \wedge J_3. \] which we will call the \emph{standard} Lie bialgebra structure. 

A finite-dimensional Lie bialgebra $(\g,\delta)$ has a canonical dual Lie bialgebra, obtained by endowing $\g^*$ with the bracket \[\lbrack \omega,\theta\rbrack (a) = (\omega\otimes \theta)\delta(a),\] and the cobracket \[\delta_*(\omega)(a\otimes b) = \omega(\lbrack a,b\rbrack).\] 

For $\g = \su(2)$, let $\{\hat{J}_i\}$ be the canonical dual basis to $\{J_i\}$. Then the Lie bialgebra structure on $\su(2)^*$ is easily computed to be determined by  \[ \lbrack \hat{J}_1,\hat{J}_{2}\rbrack =  \hat{J}_2,\quad  \lbrack \hat{J}_1,\hat{J}_{3}\rbrack =  \hat{J}_3,\quad \lbrack \hat{J}_2,\hat{J}_3\rbrack =0,\] with dual cobracket \[\hat{\delta}(\hat{J}_i) = 2 \hat{J}_{i+1}\wedge \hat{J}_{i+2}.\] This means that $\su(2)^*$ is isomorphic to $\bb = \R\ltimes \C$, which can be identified with lowertriangular complex matrices with zero trace and real diagonal, by means of the identification \begin{equation}\label{EqMatDual}2\hat{J}_1 = \begin{pmatrix} -1 & 0 \\ 0 & 1\end{pmatrix},\quad 2\hat{J}_2  = \begin{pmatrix} 0 & 0 \\ i & 0\end{pmatrix},\quad 2\hat{J}_3 = \begin{pmatrix} 0 & 0 \\ 1 & 0 \end{pmatrix}.\end{equation}

We can deform the above situation with an extra parameter $y\in \R$. Namely, consider $\su(2,y)$, which is the Lie bialgebra defined in exactly the same way as $\su(2)$, \emph{except} that, when writing the generators as $J_i^{(y)}$, the commutation relation between $J_2^{(y)}$ and $J_3^{(y)}$ now becomes \[\lbrack J_2^{(y)},J_3^{(y)}\rbrack  = 2y J_1^{(y)}.\] This means that for $y>0$, we get an isomorphic bialgebra copy of $\su(2)$, while for $y<0$, we get an isomorphic copy of $\su(1,1)$. For $y=0$, we rather get the Lie bialgebra $\ee(2)$ whose underlying real Lie algebra is $\cong i\R\ltimes \C$, and which can be realized as uppertriangular complex matrices with zero trace and purely imaginary diagonal. In general, these deformed real Lie algebras can be realized as Lie subalgebras of $\ssl(2,\C)$ by the identifications \[J_1^{(y)} = \begin{pmatrix} i &0 \\ 0 & -i\end{pmatrix},\quad J_2^{(y)} = \begin{pmatrix} 0 & 1 \\ -y & 0\end{pmatrix},\quad J_3^{(y)} = \begin{pmatrix} 0 & i \\ iy & 0\end{pmatrix}.\]

In this case, performing the dual construction gives for each $y$ the same Lie algebra $\bb$, but the cobracket now has \[\hat{\delta}(\hat{J_1}^{(y)}) = 2y \hat{J}_2^{(y)}\wedge \hat{J}_3^{(y)}.\]

One can see the above one-parameter family of Lie bialgebras as a field of Lie bialgebras, which `contract' as $y\rightarrow 0$, see \cite{BdO97}. In the following section, we will go one step further and extend this field to a bigger \emph{Lie bialgebroid}.

\section{A contraction Lie bialgebroid}

Our main reference for Lie (bi)algebroids and Poisson and Lie groupoids will be \cite{L-GSX11}. 

We first recall the notion of Lie algebroid.

\begin{Def}\label{DefLieAlgbr} A \emph{Lie algebroid} over a smooth manifold $\mathscr{O}$ consists of a vector bundle $A$ over $\mathscr{O}$ together with a vector bundle map $\rho: A\rightarrow T\mathscr{O}$, called the \emph{anchor map}, and a Lie algebra structure $\lbrack\,\cdot\,,\cdot\,\rbrack$ on $\Gamma(A)$, the space of sections of $A$, s.t.~ $\lbrack X,\cdot \,\rbrack_A$ satisfy Leibniz rule w.r.t.~ $\rho(X)$ for each $X \in \Gamma(A)$,\[\lbrack X, fY\rbrack  = f\lbrack X,Y\rbrack + \rho(X)(f)Y,\quad Y \in \Gamma(A),f\in C^{\infty}(\mathscr{O}).\]
\end{Def}

For example, any Lie algebra is a Lie algebroid over the one-point set. Also, the tangent vector bundle of a manifold becomes a Lie algebroid in a natural way. Finally, one can take direct sums of Lie algebroids. 

In the following, we will be interested in the direct sum Lie algebroid  \[\Bb_{\R} = \bb\oplus \, T\R\] over $\R$, with anchor map $\widehat{\rho}:(a,t)\mapsto t$. However, we want to deform the cobracket `in the direction of $\R$'. For this, we will need the following notion of \emph{Lie bialgebroid}.

\begin{Def}[\cite{L-GSX11}, Remark 3.14] A \emph{Lie bialgebroid} consists of a Lie algebroid $(A,\mathscr{O},\rho)$ such that also the dual bundle $(A^*,\mathscr{O})$ carries a Lie algebroid structure whose anchor map we denote $\rho_*$, and such that, writing $\langle\,\cdot\,,\cdot\,\rangle$ for the $C^{\infty}(\mathscr{O})$-valued pairing between $\Gamma(\Lambda A)$ and $\Gamma(\Lambda A^*)$, \begin{equation}\label{Eqdeltadual} \delta: \Gamma(A)\rightarrow \Gamma(\Lambda^{2}(A)),\quad \langle \delta(X),\xi\wedge\eta\rangle = \frac{1}{2}\left(\langle X,\lbrack \xi,\eta\rbrack\rangle -\rho_*(\xi)(\langle X,\eta\rangle) + \rho_*(\eta)(\langle X,\xi\rangle)\right)\end{equation} satisfies the cocycle condition \[\delta(\lbrack X,Y\rbrack) = \lbrack \delta(X),Y\rbrack + \lbrack X,\delta(Y)\rbrack.\]
\end{Def}
 
In the last equation, we use again the natural extension of $\lbrack\,\cdot\,,\cdot\,\rbrack$ to a Gerstenhaber algebra structure on $\Gamma(\Lambda A)$. Note that, by definition, \[\delta (fX) = f\delta(X)+\delta(f)\wedge X,\] where $\delta(f) = \rho^*(df)$ for $\rho^*$ dual to $\rho_*$. In fact, $\delta$ can be extended to a degree one derivation of $\Gamma(\Lambda A)$ (for its ordinary exterior algebra structure). There is a natural duality for Lie bialgebroids, simply by interchanging $(A,\mathscr{O},\rho)$ and $(A^*,\mathscr{O},\rho_*)$. 

To put a Lie bialgebroid structure on $\Bb_{\R}$, we will introduce directly the dual Lie algebroid structure. In fact, the dual will simply be a field of \emph{four-dimensional} Lie algebras, which we introduce in the next lemma. 

\begin{Lem} For $y\in \R$, let $\su_{\ext}^{(y)}(2)$ be a four-dimensional vector space generated by basis elements $\{J_0^{(y)},J_1^{(y)},J_2^{(y)},J_3^{(y)}\}$. We define on $\su_{\ext}^{(y)}(2)$ a bracket $\lbrack\,\cdot\,,\cdot\,\rbrack$ defined as \[ \lbrack J_1^{(y)},J_2^{(y)}\rbrack =2J_3^{(y)},\quad \lbrack J_2^{(y)},J_3^{(y)}\rbrack = 2yJ_1^{(y)}+2J_0,\quad \lbrack J_3^{(y)},J_1^{(y)}\rbrack =2J_2^{(y)},\] and  $J_0$ central. Then $\su_{\ext}^{(y)}(2)$ is a four-dimensional Lie algebra.
\end{Lem}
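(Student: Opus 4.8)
The plan is to verify that the prescribed bracket defines a Lie algebra structure, which amounts to checking antisymmetry and the Jacobi identity. Antisymmetry is built into the way the bracket is specified (we give the brackets on the ordered basis pairs and extend antisymmetrically, with $J_0$ central meaning all brackets involving $J_0$ vanish), so the only real content is the Jacobi identity. Since the bracket is bilinear and antisymmetric, and the Jacobi identity is a trilinear antisymmetric expression in its three arguments, it suffices to check it on strictly increasing triples of basis vectors. With a four-element basis $\{J_0,J_1,J_2,J_3\}$ there are only $\binom{4}{3}=4$ such triples to verify, namely $(J_0,J_1,J_2)$, $(J_0,J_1,J_3)$, $(J_0,J_2,J_3)$, and $(J_1,J_2,J_3)$.

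**First I would** dispose of the three triples containing $J_0$. Because $J_0$ is central, any Jacobiator $\lbrack J_0,\lbrack X,Y\rbrack\rbrack + \lbrack X,\lbrack Y,J_0\rbrack\rbrack + \lbrack Y,\lbrack J_0,X\rbrack\rbrack$ collapses: the first term vanishes as $J_0$ brackets to zero with everything, and the last two terms vanish because $\lbrack Y,J_0\rbrack=\lbrack J_0,X\rbrack=0$. Hence all triples involving $J_0$ automatically satisfy Jacobi, regardless of the other two entries. This reduces the problem to the single triple $(J_1^{(y)},J_2^{(y)},J_3^{(y)})$.

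**Then I would** compute the remaining Jacobiator explicitly. Writing $J_i$ for $J_i^{(y)}$ to lighten notation, I evaluate
\[
\lbrack J_1,\lbrack J_2,J_3\rbrack\rbrack + \lbrack J_2,\lbrack J_3,J_1\rbrack\rbrack + \lbrack J_3,\lbrack J_1,J_2\rbrack\rbrack.
\]
Using the defining relations, the inner brackets are $\lbrack J_2,J_3\rbrack = 2yJ_1 + 2J_0$, $\lbrack J_3,J_1\rbrack = 2J_2$, and $\lbrack J_1,J_2\rbrack = 2J_3$. Substituting and using centrality of $J_0$ (so $\lbrack J_1,J_0\rbrack=0$), the three terms become $\lbrack J_1, 2yJ_1+2J_0\rbrack = 0$, then $2\lbrack J_2,J_2\rbrack=0$, and $2\lbrack J_3,J_3\rbrack=0$. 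Since each contribution vanishes separately, the Jacobiator is zero, completing the verification.

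**The main obstacle**, such as it is, is conceptual rather than computational: one must recognize that the only nontrivial check is the single $(J_1,J_2,J_3)$ triple and that the new central term $2J_0$ appearing in $\lbrack J_2,J_3\rbrack$ is harmless precisely because $J_0$ is central and thus brackets trivially with the $J_1$ that acts on it. In effect, $\su_{\ext}^{(y)}(2)$ is the direct sum of the central line $\R J_0$ with a term that modifies only the $\lbrack J_2,J_3\rbrack$ relation by a central element, so it is a central extension of the family $\su(2,y)$ of the previous section; one could alternatively phrase the whole proof as the observation that a central modification of a structure constant never disturbs the Jacobi identity.
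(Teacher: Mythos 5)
Your proof is correct, but it takes a genuinely different route from the paper's. You verify the Jacobi identity directly and uniformly in $y$: the three basis triples containing the central element $J_0$ are automatic, and for $(J_1^{(y)},J_2^{(y)},J_3^{(y)})$ each term of the Jacobiator vanishes separately because every inner bracket lies in the span of the outer argument plus the centre. The paper instead observes that for $y\neq 0$ the change of basis $J_1^{(y)}\mapsto J_1^{(y)}+y^{-1}J_0$ exhibits $\su_{\ext}^{(y)}(2)$ as the direct sum Lie algebra $\su(2,y)\oplus\R$, and then handles $y=0$ by continuity (the Jacobiator is polynomial in the structure constants, hence in $y$, and vanishes for all $y\neq 0$). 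Your argument buys uniformity --- no case split at $y=0$ and no appeal to continuity --- at the cost of an explicit (though here essentially trivial) computation; the paper's argument is shorter and records the structural fact that the central extension splits for $y\neq 0$, which is precisely what makes the $y=0$ fibre the interesting one. One caution about your closing remark: it is not true in general that a central modification of a structure constant never disturbs the Jacobi identity. A central extension of $\g$ by a $2$-cochain $\omega$ is a Lie algebra only if $\omega$ is a $2$-cocycle, i.e. $\omega(X,\lbrack Y,Z\rbrack)+\omega(Y,\lbrack Z,X\rbrack)+\omega(Z,\lbrack X,Y\rbrack)=0$. Here that condition happens to hold because $\omega$ is supported on the pair $(J_2,J_3)$ and each $\lbrack J_i,J_{i+1}\rbrack$ is proportional to $J_{i+2}$ modulo the centre --- which is exactly what your explicit computation verifies --- so the proof itself is unaffected, but the general slogan should not be relied upon elsewhere.
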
 

\begin{proof} It is trivial to check that, for $y\neq 0$, we have an isomorphism of $\su_{\ext}^{(y)}(2)$ with the direct sum Lie algebra $\su(2,y)\oplus \R$. Hence $\su_{\ext}^{(y)}(2)$ is a Lie algebra. By continuity, also $\su_{\ext}^{(0)}(2)$ is a Lie algebra.
\end{proof} 

Let now $\mathfrak{A}_{\R}$ be the trivial vector bundle over $\R$ with fiber $\su_{\ext}^{(y)}(2)$ at $y$. Then $\mathfrak{A}_{\R}$ forms a bundle of Lie algebras, and hence a Lie algebroid with respect to the trivial anchor map. Moreover, we can naturally pair $\mathfrak{A}_{\R}$ with $\Bb_{\R}$ by making the constant sections \[J_i: y \mapsto J_i^{(y)}\] dual to the following sections of $\Bb_{\R}$, \[\hat{J}_i = (\hat{J}_i,0) \textrm{ for }i\in \{1,2,3\}, \quad \hat{J}_0 = (0,2\frac{\partial}{\partial y}).\]

\begin{Prop} The above pairing of Lie algebroids make $\mathfrak{A}_{\R}$ and $\Bb_{\R}$ into (dual) Lie bialgebroids.
\end{Prop}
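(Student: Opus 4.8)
The plan is to set $A=\Bb_{\R}$, with anchor $\widehat{\rho}$, and $A^*=\mathfrak{A}_{\R}$, with its (trivial) anchor $\rho_*=0$, and to verify the single remaining ingredient of the definition: that the cobracket $\delta$ on $\Gamma(\Bb_{\R})$ produced by \eqref{Eqdeltadual} satisfies the cocycle identity $\delta(\lbrack X,Y\rbrack)=\lbrack\delta(X),Y\rbrack+\lbrack X,\delta(Y)\rbrack$. Both underlying Lie algebroids are already in place --- $\Bb_{\R}$ as a direct sum and $\mathfrak{A}_{\R}$ as a bundle of Lie algebras by the preceding Lemma --- so nothing else is at stake. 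I would work from the $\Bb_{\R}$ side precisely because $\rho_*=0$ kills the two transport terms in \eqref{Eqdeltadual}, leaving $\langle\delta(X),\xi\wedge\eta\rangle=\tfrac12\langle X,\lbrack\xi,\eta\rbrack\rangle$; thus $\delta$ is just the transpose of the bracket of $\mathfrak{A}_{\R}$, whereas the dual side would drag in derivative terms from the non-trivial anchor $\widehat{\rho}$.

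First I would read off $\delta$ on the global frame $\hat J_0,\hat J_1,\hat J_2,\hat J_3$ by pairing the brackets of $\su_{\ext}^{(y)}(2)$ against the $J_i\wedge J_j$. Up to the overall normalisation fixed by \eqref{Eqdeltadual} this gives $\delta(\hat J_0)=\hat J_2\wedge\hat J_3$, $\delta(\hat J_1)=y\,\hat J_2\wedge\hat J_3$, $\delta(\hat J_2)=\hat J_3\wedge\hat J_1$ and $\delta(\hat J_3)=\hat J_1\wedge\hat J_2$. Two structural features will then do the bookkeeping for me. Since $\rho_*=0$ forces $\delta(f)=\rho^*(df)=0$, the map $\delta$ is $C^{\infty}(\R)$-linear; and its image on the frame involves only $\hat J_1,\hat J_2,\hat J_3$, all of which have vanishing anchor. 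A short computation then shows that the cocycle defect $F(X,Y)=\delta(\lbrack X,Y\rbrack)-\lbrack\delta(X),Y\rbrack-\lbrack X,\delta(Y)\rbrack$ is $C^{\infty}(\R)$-bilinear (the only potential obstruction, a term of the shape $\lbrack\delta(Y),f\rbrack\wedge X$, vanishes because $\delta(Y)$ never has a $\hat J_0$-component). Hence it suffices to verify $F(\hat J_\mu,\hat J_\nu)=0$ on the four frame sections.

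I would then split these finitely many checks into two families. When both indices lie in $\{1,2,3\}$ the sections have zero anchor and $y$ enters only as a scalar, so the identity collapses to the fibrewise Lie bialgebra cocycle identity for $\su(2,y)^*\cong\bb$ with its $y$-deformed cobracket; this holds for every $y$ by the dual-pair computation of the first section and needs no new work. The genuinely new checks are those containing $\hat J_0$, and the decisive one is the pair $(\hat J_0,\hat J_1)$. Here the left-hand side is $\delta(\lbrack\hat J_0,\hat J_1\rbrack)=\delta(0)=0$, while on the right $\lbrack\delta(\hat J_0),\hat J_1\rbrack=\lbrack\hat J_2\wedge\hat J_3,\hat J_1\rbrack=-2\,\hat J_2\wedge\hat J_3$ (from the $\bb$-relations $\lbrack\hat J_1,\hat J_2\rbrack=\hat J_2$, $\lbrack\hat J_1,\hat J_3\rbrack=\hat J_3$), and the anchor of $\hat J_0=(0,2\tfrac{\partial}{\partial y})$ acting on the $y$-dependent bivector gives $\lbrack\hat J_0,y\,\hat J_2\wedge\hat J_3\rbrack=2\,\hat J_2\wedge\hat J_3$; the two cancel.

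This cancellation is where I expect the only real subtlety to sit: it is the demand that differentiating the cobracket in the transversal direction, $\tfrac{\partial}{\partial y}\delta(\hat J_1)\propto\hat J_2\wedge\hat J_3$, reproduce exactly the algebraic term supplied by the central generator $J_0$ of $\su_{\ext}^{(y)}(2)$. This is what matches the normalisation $\hat J_0=(0,2\tfrac{\partial}{\partial y})$ to the factor $2$ in the $\bb$-bracket and so pins down the central extension; note that it is insensitive to the overall scale of $\delta$, since $\lbrack\delta(\hat J_0),\hat J_1\rbrack$ and $\lbrack\hat J_0,\delta(\hat J_1)\rbrack$ rescale together. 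Once this and the remaining, easier $\hat J_0$-checks are in hand, the defect $F$ vanishes on the frame and hence identically, so $\delta$ is a cocycle and $(\Bb_{\R},\mathfrak{A}_{\R})$ is a Lie bialgebroid; the dual statement is then immediate from the symmetry of the definition under interchanging $A$ and $A^*$.
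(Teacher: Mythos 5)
Your proposal is correct and follows essentially the same route as the paper: reduce the cocycle identity to the frame $\{\hat J_0,\hat J_1,\hat J_2,\hat J_3\}$ (using that $\delta$ is $C^{\infty}(\R)$-linear and that its image avoids $\hat J_0$), dispose of the checks among $\{\hat J_1,\hat J_2,\hat J_3\}$ via the Lie bialgebra $\bb$, and verify the decisive cancellation $\lbrack\delta(\hat J_0),\hat J_1\rbrack+\lbrack\hat J_0,\delta(\hat J_1)\rbrack=-2\,\hat J_2\wedge\hat J_3+2\,\hat J_2\wedge\hat J_3=0$, which is exactly the paper's third displayed identity. The only (harmless) divergence is that the paper works with the equivalent characterization requiring $\widehat{\delta}^{\,2}=0$ and checks this by dualizing back to the Lie algebroid structure on $\mathfrak{A}_{\R}$, whereas you start from the definition in which both algebroid structures are given, so that condition is automatic.
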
 

\begin{proof} 
Write $Y\in C^{\infty}(\R)$ for the identity function $y\mapsto y$. Consider the following elements in $\Gamma(\Lambda^2\Bb_{\R})$:  \begin{equation}\label{Eqdelta0} \widehat{\delta}(\hat{J}_2) = 2\hat{J}_3\wedge \hat{J}_{1},\quad  \widehat{\delta}(\hat{J}_3) = 2\hat{J}_1\wedge \hat{J}_2,\quad \widehat{\delta}(\hat{J}_1) = 2Y\hat{J}_2\wedge \hat{J}_3,\quad \widehat{\delta}(\hat{J}_0) = 2Y\hat{J}_2\wedge \hat{J}_3.\end{equation}
We can extend this to a linear map \[\widehat{\delta}: \Gamma(\Bb_{\R})\rightarrow \Gamma(\Lambda^2\Bb_{\R}),\quad fX \mapsto f\widehat{\delta} X,\qquad f\in C^{\infty}(\mathscr{O}), X\in \{\hat{J}_i\}.\] We further define $\widehat{\delta}(f) = 0$ for $f\in  C^{\infty}(\mathscr{O})$, and extend $\widehat{\delta}$ to a degree one derivation of the exterior algebra $\Gamma(\Lambda \Bb_{\R})$.

We want to show that $\widehat{\delta}$ turns $\Bb_{\R}$ into a Lie bialgebroid. By \cite[Definition 3.12]{L-GSX11}, our goal is to check that $\widehat{\delta}^2=0$ and \begin{eqnarray} \label{Eqdelta1}\widehat{\delta}(fg) &=& g\widehat{\delta}(f)+f\widehat{\delta}(g),\\ \label{Eqdelta2}\widehat{\delta}(fX) &=& \widehat{\delta}(f)\wedge X+f\widehat{\delta} X,\\ \label{Eqdelta3} \widehat{\delta}(\lbrack X,Y\rbrack) &=& \lbrack \widehat{\delta} X,Y\rbrack +\lbrack X,\widehat{\delta} Y\rbrack.\end{eqnarray}  Now it is easy to see that it is sufficient to check these relations for $X,Y$ in the generating set $\{\hat{J}_i\}$ (and arbitrary functions), as long as we also check the identity \begin{eqnarray}\label{Eqdelta4}\widehat{\delta}(\lbrack X,g\rbrack) &=& \lbrack \widehat{\delta} X,g\rbrack +\lbrack X,\widehat{\delta} g\rbrack\end{eqnarray} on this set. 

The identities \eqref{Eqdelta1} and \eqref{Eqdelta2} hold by definition. For \eqref{Eqdelta4}, we have by definition that $\lbrack X,g\rbrack = \widehat{\rho}(X)(g)$, and as $\hat{\delta}$ is zero on functions, we have to prove $\lbrack \widehat{\delta} X,g\rbrack=0$ for $X$ in the generating set. This follows immediately from the fact that the elements of \eqref{Eqdelta0} do not contain $\hat{J}_0$ (and the anchor map is zero on $\{\hat{J}_1,\hat{J}_2,\hat{J}_3\}$).

It remains to verify $\widehat{\delta}^2=0$ and \eqref{Eqdelta3}. For the latter, we observe that, as $\widehat{\delta}$ is $C^{\infty}(\mathscr{O})$-linear, the cocycle relation for brackets between $\{\hat{J}_1,\hat{J}_2\}$ and $\{\hat{J}_1,\hat{J}_3\}$ follow from those for the bialgebras  $\bb$. Finally, as $\{\hat{J}_1,\hat{J}_2,\hat{J}_3\}$ commute with $\hat{J}_0$, it remains to verify that the right hand side of \eqref{Eqdelta3} is zero for $X\in \{\hat{J}_1,\hat{J}_2,\hat{J}_3\}$ and $Y=\hat{J}_0$. This boils down to proving \[\lbrack \hat{J}_{2},\hat{J}_{2}\wedge \hat{J}_3\rbrack =0,\quad \lbrack \hat{J}_{3},\hat{J}_{2}\wedge \hat{J}_3\rbrack =0, \quad \lbrack Y \hat{J}_2\wedge\hat{J}_3,\hat{J}_0\rbrack +\lbrack \hat{J}_1,\hat{J}_2\wedge \hat{J}_3\rbrack =0.\] The first identities are immediate since $\hat{J}_2$ and $\hat{J}_3$ commute in $\bb$. For the third identity, we compute \begin{eqnarray*} && \hspace{-2cm}   \lbrack Y\hat{J}_2\wedge \hat{J}_3,\hat{J}_0\rbrack +\lbrack \hat{J}_1,\hat{J}_2\wedge \hat{J}_3\rbrack  \\&=& \lbrack Y\hat{J}_2,\hat{J}_0\rbrack \wedge \hat{J}_3 + \lbrack \hat{J}_1,\hat{J}_2\rbrack \wedge \hat{J}_3 + \hat{J}_2\wedge \lbrack \hat{J}_1,\hat{J}_3\rbrack \\ &=& -\widehat{\rho}(\hat{J}_0)(Y)\hat{J}_2\wedge \hat{J}_3 +2\hat{J}_2\wedge \hat{J}_3 \\ &=& -2\hat{J}_2\wedge \hat{J}_3 +2\hat{J}_2\wedge \hat{J}_3 \\ &=& 0.
\end{eqnarray*}

Finally, to verify $\widehat{\delta}^2=0$, we can do a direct computation, but it suffices also to establish that the dual of $\widehat{\delta}$ leads, by the duality given before the statement of the proposition, to the Lie algebroid structure on $\mathfrak{A}_{\R}$. But since $\widehat{\delta} =0$ on functions, we indeed have that the dual anchor map is zero, and the duality formula \eqref{Eqdeltadual} is simply $\widehat{\delta}(X)(\xi,\eta) = X(\lbrack \xi,\eta\rbrack)$. The duality is then immediate from the formulae \eqref{Eqdelta0}.

\end{proof} 

\begin{Rem}\label{RemFormdelta} From the above duality, it follows that the cobracket $\delta$ of $\mathfrak{A}_{\R}$ is given by \[\delta(J_1) = 0 ,\quad \delta(J_2) = J_1\wedge J_2,\quad \delta(J_3)= J_1\wedge J_3,\quad \delta(E) =0,\quad \delta(Y) = 2J_0 .\] 
\end{Rem}

\begin{Rem} The above is in a sense \emph{dual} to the case of \emph{dynamical $\su(2)$} \cite{EV98} (see also \cite[Example 3.16.(4)]{L-GSX11}), where a Lie bialgebroid structure is constructed on the direct sum bundle $\su(2)\oplus T\R \rightarrow \R$. However, an important difference in this case is that the cobracket does not vanish on functions, giving the Lie bialgebroid its `dynamical' character. 

\end{Rem}

\section{A contraction Poisson groupoid}

Our next goal is to integrate the Lie bialgebroids of the previous section.

\begin{Def}\label{DefLieGrpd} A \emph{Lie groupoid} consists of a groupoid whose arrow space $\mathscr{G}$ and object space $\mathscr{O}$ are both smooth manifolds, whose source and target maps $\mathscr{G} \overset{s}{\underset{t}{\rightrightarrows}}  \mathscr{O}$ are smooth submersions, and all of whose structure maps are smooth. 
\end{Def} 

Note that \[\mathscr{G}^{(2)} := \{(f,g) \in \mathscr{G}\times \mathscr{G} \mid t(f) = s(g)\} \subseteq \mathscr{G}\times \mathscr{G}\] is a smooth submanifold by the assumptions on $s$ and $t$. Hence it makes sense to ask that the multiplication map $\mathscr{G}^{(2)}\rightarrow \mathscr{G}$ is smooth.  A Lie groupoid with $\mathscr{O}$ a single point is precisely a Lie group. A subtle point is that one does not always require $\mathscr{G}$ to be Hausdorff -- this can already happen when $\mathscr{G}$ is a bundle of Lie groups, as we will see later.

Any Lie groupoid gives rise to a Lie algebroid by the correspondence
\[ (\mathscr{O},\mathscr{G},s,t)\qquad  \Rightarrow \qquad   A = \ker(\rd s)_{|\mathscr{O}}, \quad \rho = \rd t_{\mid A}.\] The converse direction (Lie's third theorem) is more complicated in the Lie algebroid setting and not always possible in general \cite{CF03}.

Our Lie algebroid $\Bb_{\R}$, being a direct sum of two simple Lie algebroids, is trivially integrable, with associated Lie groupoid \[B_{\R} = \R\times B\times \R,\] where $B$ is the Lie group \[B = \{\begin{pmatrix} a & 0 \\ b & a^{-1}\end{pmatrix}\mid a\in \R_+,b\in \C\},\] and where the product is given by \[(x,g,y)(y,h,z) = (x,gh,z).\] Here the real Lie algebra $\bb$ is identified as the Lie algebra of $B$ by the matrix representation \eqref{EqMatDual}.

We will need to impose a compatible Poisson structure on our Lie groupoid. The following notion of \emph{Poisson groupoid} was introduced in \cite{Wei88}, see also \cite[Definition 3.1]{L-GSX11}.

\begin{Def} A Lie groupoid $\mathscr{G}$ is called a \emph{Poisson groupoid} if $\mathscr{G}$ is equipped with a Poisson structure for which the graph \[\graph(\mult) = \{(f,g,h)\mid (f,g)\in \mathscr{G}^{(2)}, h \in \mathscr{G}, fg=h\}\] of the multiplication map $\mult: \mathscr{G}^{(2)} \rightarrow \mathscr{G}$ is a coisotropic submanifold of $\mathscr{G}\times \mathscr{G}\times \overline{\mathscr{G}}$.
\end{Def} 

Here $\overline{\mathscr{G}}$ means $\mathscr{G}$ with the opposite Poisson structure. Recall further that a submanifold $P\subseteq M$ of a Poisson manifold is called \emph{coisotropic} if the Hamiltonion of a function which disappears on $P$ is tangent to $P$. Equivalently, the pairing between the conormal bundle $N^*P$ and $TM_{\mid P}$, induced by the Poisson bivector, factors through $TP$.

A Poisson groupoid with $\mathscr{O}$ a single point is a Poisson-Lie group.

The infinitesimal structure associated to a Poisson groupoid will be precisely a Lie bialgebroid structure on the associated Lie algebroid $A$ over $\mathscr{O}$. Namely, by means of the pullback with $t$ we can identify $C^{\infty}(\mathscr{O})$ with the subalgebra $t^*C^{\infty}(\mathscr{O})\subseteq C^{\infty}(\mathscr{G})$ of left invariant functions, and we can identify $\Gamma(A)$ with left invariant tangent vector fields $\Gamma_l(T\mathscr{G})$. Then the operation $\delta$ determined by the equations \[\delta(f)(dg)= 2\{g,f\}, \quad \delta(X)(df,dg) = X(\{f,g\})-\{Xf,g\}-\{f,Xg\}\] can be shown to map $t^*C^{\infty}(\mathscr{O})$ and $\Gamma_l(T\mathscr{G})$ into respectively $\Gamma_l(T\mathscr{G})$ and $\Gamma_l(\Lambda^2T\mathscr{G})$, and to give in effect a Lie bialgebroid, see \cite[Corollary 3.10]{L-GSX11} and the discussion following it.

Denote now by $\Pi$ the fundamental bivector associated to a Poisson manifold. In the following theorem, we introduce a Poisson groupoid structure on $B_{\R}$. The formulas will use the identification $T_bB \subseteq T_bM_2(\C) \cong M_2(\C)$.

\begin{Theorem} Define on $B_{\R} = \R\times B \times \R$ the bivector $\Pi$ by \[\Pi(x,g,y) = \Pi_{x,y}(g),\] where $\Pi_{x,y}$ is the following bivector on $B$: \[\Pi_{x,y}(\begin{pmatrix} a & 0 \\ z & a^{-1}\end{pmatrix}) = \begin{pmatrix} 0 & 0 \\ iz & 0\end{pmatrix} \wedge \begin{pmatrix} -a & 0\\ 0 & a^{-1}\end{pmatrix} + \frac{1}{4}(xa^2-ya^{-2}) \begin{pmatrix} 0 & 0 \\ i & 0 \end{pmatrix} \wedge \begin{pmatrix} 0 & 0 \\ 1& 0\end{pmatrix}.\] Then $\Pi$ is a Poisson bivector turning $B_{\R}$ into a Poisson groupoid with $\Bb_{\R}$ as its associated Lie bialgebroid. 
\end{Theorem}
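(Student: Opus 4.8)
My plan is to verify, in turn, that $\Pi$ is a Poisson bivector, that it is multiplicative (so that $B_\R$ becomes a Poisson groupoid), and that the induced Lie bialgebroid is $\Bb_\R$.

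\emph{Step 1 (Poisson property).} Since $\Pi(x,g,y)=\Pi_{x,y}(g)$ is everywhere tangent to the $B$-factor and has no legs along the two $\R$-directions, the Schouten bracket $[\Pi,\Pi]$ only ever differentiates the coefficients of $\Pi$ in the $B$-directions; hence on each fibre it equals $[\Pi_{x,y},\Pi_{x,y}]$ computed on $B$, and it suffices to check that every $\Pi_{x,y}$ is Poisson on $B$. Writing $g=\begin{pmatrix} a & 0 \\ z & a^{-1}\end{pmatrix}$ with $z=z_1+iz_2$ and using $T_bB\subseteq M_2(\C)$, one reads off $\{z_1,a\}=az_2$, $\{z_2,a\}=-az_1$ and $\{z_1,z_2\}=-\tfrac14(xa^2-ya^{-2})$. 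The Jacobi identity for $(a,z_1,z_2)$ is then a one-line verification: the term $\{\{z_1,z_2\},a\}$ vanishes because $xa^2-ya^{-2}$ depends on $a$ only, and the two remaining cyclic terms cancel.

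\emph{Step 2 (reduction of multiplicativity).} With the convention $s(x,g,y)=x$, $t(x,g,y)=y$, the graph $\graph(\mult)\subseteq B_\R\times B_\R\times\overline{B_\R}$ is cut out by the three equations matching the $\R$-coordinates and the three equations encoding $g_3=g_1g_2$. Because $\Pi$ annihilates every covector in the base directions, the Hamiltonian vector fields of the three matching equations vanish identically, so those constraints are automatically tangent to the graph and Poisson-commute with everything. Coisotropy therefore reduces to the vanishing, on the graph, of the brackets of the group constraints, which is precisely the statement that \[ \mult\colon (B,\Pi_{x,w})\times (B,\Pi_{w,y})\longrightarrow (B,\Pi_{x,y}) \] is a Poisson map for all $x,w,y\in\R$ (here $w$ is the common $\R$-coordinate shared by the first two factors along the graph). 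Equivalently, using that left and right translations on $B\subseteq M_2(\C)$ are matrix multiplications, I must prove \[ \Pi_{x,y}(g_1g_2)=(R_{g_2})_*\Pi_{x,w}(g_1)+(L_{g_1})_*\Pi_{w,y}(g_2). \]

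\emph{Step 3 (the multiplicativity identity).} I split $\Pi_{x,y}=\Pi^{(0)}+\Pi^{(1)}_{x,y}$ into its $(x,y)$-independent part and the transversal part $\Pi^{(1)}_{x,y}(g)=\tfrac14(xa^2-ya^{-2})\,\eta\wedge\xi$, with $\eta=\begin{pmatrix}0&0\\ i&0\end{pmatrix}$ and $\xi=\begin{pmatrix}0&0\\ 1&0\end{pmatrix}$; since the product Poisson structure has no cross terms and pushforward is additive, the two parts may be treated separately. For $\Pi^{(0)}$ this is the assertion that the standard multiplicative Poisson structure on the dual Poisson--Lie group $B$ of $SU(2)$ is multiplicative, which is verified directly on the coordinate functions $a(g_1g_2)=a_1a_2$ and $z(g_1g_2)=a_2z_1+a_1^{-1}z_2$. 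For $\Pi^{(1)}$ the point is that $\xi g_2=a_2\xi$, $\eta g_2=a_2\eta$ and $g_1\xi=a_1^{-1}\xi$, $g_1\eta=a_1^{-1}\eta$, whence the right-hand side equals \[ \tfrac14\Big[(xa_1^2-wa_1^{-2})a_2^2+(wa_2^2-ya_2^{-2})a_1^{-2}\Big]\,\eta\wedge\xi=\tfrac14\big(x(a_1a_2)^2-y(a_1a_2)^{-2}\big)\,\eta\wedge\xi, \] which is exactly $\Pi^{(1)}_{x,y}(g_1g_2)$. The cancellation of the intermediate parameter $w$ is the crux of the whole statement: it is what turns the two-parameter family of Poisson structures into a genuine groupoid rather than merely a family of Poisson--Lie groups, and together with the (standard, but still computational) multiplicativity of $\Pi^{(0)}$ it is the step I expect to be the main obstacle.

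\emph{Step 4 (the induced Lie bialgebroid).} Finally I compute the induced cobracket through $\delta(X)(df,dg)=X\{f,g\}-\{Xf,g\}-\{f,Xg\}$ on the left-invariant frame, where $\hat J_1,\hat J_2,\hat J_3$ are the left-invariant fields on the $B$-factor corresponding to $\hat J_1,\hat J_2,\hat J_3\in\bb$ (cf.\ \eqref{EqMatDual}) and $\hat J_0=(0,2\tfrac{\partial}{\partial y})$ is the base direction. The $B$-directions linearize the brackets $\{z_1,a\}=az_2$, $\{z_2,a\}=-az_1$ at the unit, recovering the $\bb$-cobracket and hence $\widehat\delta(\hat J_2)$ and $\widehat\delta(\hat J_3)$; differentiating the transversal coefficient in the $a$-direction produces the $Y$-dependent term $\widehat\delta(\hat J_1)=2Y\hat J_2\wedge\hat J_3$, while the action of $\hat J_0=2\tfrac{\partial}{\partial y}$ on that coefficient produces the central contribution reflecting the $2J_0$ appearing in $[J_2,J_3]$ in $\mathfrak{A}_\R$. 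Checking that the result is dual to the bracket of $\mathfrak{A}_\R=\su_{\ext}^{(y)}(2)$, as in \eqref{Eqdelta0}, identifies the associated Lie bialgebroid as $\Bb_\R$ and completes the proof.
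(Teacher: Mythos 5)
Your proposal is correct and follows essentially the same route as the paper: reduce the Poisson property to the fibrewise bivectors $\Pi_{x,y}$, reduce coisotropy of the multiplication graph to the bitranslation identity $\Pi_{x,y}(g_1g_2)=\Pi_{x,w}(g_1)g_2+g_1\Pi_{w,y}(g_2)$ (the paper invokes \cite[Theorem 3.8]{L-GSX11} at this point, while you argue directly from the constraints cutting out the graph), and then identify the induced cobracket with \eqref{Eqdelta0}. You actually supply more of the computational detail (the explicit brackets in Step 1 and the cancellation of the intermediate parameter $w$ in Step 3) that the paper dismisses as ``easily checked''.
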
 

\begin{proof} First of all, we have to verify that $\Pi$ is a Poisson bivector. This is equivalent with showing that each $\Pi_{x,y}$ is a Poisson bivector (since $\Pi$ does not contain any derivations in the $x$- or $y$-directions). It is however easy to see that upon expanding the Schouten-Nijenhuis bracket $\lbrack \Pi_{x,y},\Pi_{x,y}\rbrack$, all terms will vanish for trivial reasons. Hence $\Pi$ is a Poisson vector. 

To verify that $\Pi$ turns $B_{\R}$ into a Poisson groupoid, we check the conditions in \cite[Theorem 3.8]{L-GSX11}. In fact, since $\Pi$ vanishes on $\id_{\R} = \{(x,e_B,x)\mid x\in \R\} \subseteq B_{\R}$, the first `bitranslation condition' becomes the identity \[\Pi_{x,z}(gh) = \Pi_{x,y}(g)h + g\Pi_{y,z}(h),\qquad x,y,z\in\R,g,h\in B,\] which is easily checked by a direct computation. The fact that $\id_{\R}$ is coisotropic follows immediately from the fact that $\Pi$ does not contain any derivations in the $x$- or $y$-directions. For this same reason, the other conditions are trivially satisfied. 

It remains to check that $\Bb_{\R}$ is the Lie bialgebroid of $B_{\R}$. Let $\widehat{\delta}$ be the cobracket associated to $\Pi$. Then $\widehat{\delta}(f)=0$ for $f\in C^{\infty}(\R)$ follows immediately. Let us check that \[\widehat{\delta}(\hat{J}_0) = 2\hat{J}_2\wedge \hat{J}_3.\] In fact, the left translated vector field associated to $\hat{J}_0$ is given by $(\hat{J}_0)_{(x,g,y)} = 2\frac{\partial}{\partial y}$. An easy computation then shows that, for $g= \begin{pmatrix} a & 0 \\ z & a^{-1}\end{pmatrix}$, \[\lbrack \hat{J}_0, \Pi\rbrack_{(x,g,y)} = 2a^{-2} \hat{J}_2 \wedge \hat{J}_3,\] and so indeed the formula for $\widehat{\delta}(\hat{J}_0)$ holds. The other values of $\widehat{\delta}$ are also easily computed. 
\end{proof} 

As an immediate corollary, we obtain:

\begin{Cor} Let $A,B,C$ be the coordinate functions of the map \[(x,\begin{pmatrix} a & 0 \\ z & a^{-1}\end{pmatrix},y) \mapsto (a,z,\overline{z}).\] Then the Poisson bracket satisfies  \[\{A,B\} = iAB,\quad \{A,C\} = -iAC,\quad \{B,C\} = \frac{i}{2}(x A^2- y A^{-2}),\] while the bracket with any $f\in C^{\infty}(\R\times\R)$ is zero. 
\end{Cor}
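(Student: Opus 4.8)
The plan is to read off the three brackets directly from the bivector $\Pi$ of the previous theorem via the defining relation $\{f,h\} = \langle \Pi, df\wedge dh\rangle$, once $\Pi$ has been rewritten in terms of honest coordinate vector fields. Since $A,B,C$ are pulled back from $B$ and since any $f\in C^{\infty}(\R\times\R)$ depends only on $(x,y)$, and these two sets of variables never mix, the entire computation takes place fibrewise inside $\Pi_{x,y}$.

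First I would introduce real coordinates $(a,u,v)$ on $B$ through $z=u+iv$, parametrising $g=\begin{pmatrix} a & 0 \\ u+iv & a^{-1}\end{pmatrix}$, and identify the coordinate fields with their images in $T_gB\subseteq M_2(\C)$:
\[ \partial_u = \begin{pmatrix} 0 & 0 \\ 1 & 0\end{pmatrix},\qquad \partial_v = \begin{pmatrix} 0 & 0 \\ i & 0\end{pmatrix},\qquad \partial_a = \begin{pmatrix} 1 & 0 \\ 0 & -a^{-2}\end{pmatrix}. \]
The crux is the translation of the two matrix factors in the first term of $\Pi_{x,y}$: one checks $\begin{pmatrix} -a & 0 \\ 0 & a^{-1}\end{pmatrix} = -a\,\partial_a$ and, using $iz = -v+iu$, that $\begin{pmatrix} 0 & 0 \\ iz & 0\end{pmatrix} = u\,\partial_v - v\,\partial_u$, while the second term becomes $\frac{1}{4}(xa^2-ya^{-2})\,\partial_v\wedge\partial_u$. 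This yields
\[ \Pi_{x,y} = a v\,\partial_u\wedge\partial_a \;-\; a u\,\partial_v\wedge\partial_a \;+\; \frac{1}{4}(xa^2 - ya^{-2})\,\partial_v\wedge\partial_u. \]

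With $dA = da$, $dB = du + i\,dv$, and $dC = du - i\,dv$, I would then contract termwise. The two mixed terms feed only into $\{A,B\}$ and $\{A,C\}$: pairing with $dB$ recombines $iu-v$ back into $iz$, giving $\{A,B\} = iaz = iAB$, and pairing with $dC$ recombines $iu+v$ into $i\bar z$, giving $\{A,C\} = -ia\bar z = -iAC$. The third term is the only one surviving in $\{B,C\}$, where the antisymmetrised pairing produces $i-(-i)=2i$, whence $\{B,C\} = \frac{i}{2}(xA^2 - yA^{-2})$. Finally, because $\Pi$ carries no $\partial_x$ or $\partial_y$, every term annihilates $df$ whenever $f=f(x,y)$, so $\{f,\cdot\}=0$.

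The only genuine obstacle is this coordinate translation of $\Pi$, and in particular keeping careful track of the identification $iz=-v+iu$ together with the sign conventions in the wedge pairing; once $\Pi$ is written in the $\partial_a,\partial_u,\partial_v$ basis, the three brackets and the centrality of $C^{\infty}(\R\times\R)$ drop out by inspection, which is exactly why the statement is an \emph{immediate} corollary of the theorem.
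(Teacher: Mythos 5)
Your proof is correct and is exactly the computation the paper has in mind: the corollary is stated without proof as ``immediate,'' and the intended argument is precisely the translation of $\Pi_{x,y}$ into the coordinate fields $\partial_a,\partial_u,\partial_v$ followed by contraction with $dA$, $dB=du+i\,dv$, $dC=du-i\,dv$. All of your identifications (in particular $\partial_a=\bigl(\begin{smallmatrix}1&0\\0&-a^{-2}\end{smallmatrix}\bigr)$ and $iz=-v+iu$) and the resulting brackets check out against the stated formulas.
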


\begin{Rem} The $(B,\Pi_{x,y})$ can be seen as a right \emph{Poisson torsor} for the Poisson group $(B,\Pi_{y,y})$, in that $B$ is a right $B$-torsor for an action which is Poisson with respect to the stated Poisson structures. 
\end{Rem} 

Let us now turn towards the integration of the dual Lie bialgebroid $\mathfrak{A}_{\R}$. This is a trickier question. By general theory \cite{DL66}, one knows that the bundle of Lie algebras $\mathfrak{A}_{\R}$ integrates to a bundle of simply connected Lie groups, but this bundle will not be Hausdorff, as the reasoning in \cite[Section IV.3]{DL66} shows. Moreover, the fibers will not necessarily be matrix groups. Therefore, rather than describe a full bundle of Lie groups, we will describe a bundle of \emph{local Lie groups} integrating $\mathfrak{A}_{\R}$. 

In the following, we denote by $\Arg$ the $(-\pi,\pi)$-valued argument function on $\C\setminus \R^-$. For a matrix $m \in M_2(\C)$, we write \[m = \begin{pmatrix} a(m) & b(m) \\ c(m) & d(m)\end{pmatrix}.\]

\begin{Prop}\label{PropBundle} Fix $y\in \R$, and define $G_y \subseteq SL(2,\C)$ as the subspace \[G_y = \{\begin{pmatrix} a & b \\ -y\bar{b} & \bar{a} \end{pmatrix} \mid |a|^2+y|b|^2 = 1,  - \frac{\pi}{3}<\Arg(a)<\frac{\pi}{3}\}.\] Define on $G_y^{(2)} = \{(g,h)\in G_y\times G_y \mid gh \in G_y\}$ the function \[\Omega_y(g,h) = \frac{1}{y}\Arg\left(\frac{a(gh)}{a(g)a(h)}\right),\] where \[\Omega_0(g,h) = \Imm\left(-\frac{b(g)\overline{b(h)}}{a(g)a(h)} \right) = \lim_{y\rightarrow 0} \Omega_y(g,h).\] Then \[\widetilde{G}_y = \{(r,g)\mid r\in \R,g\in G_y\}\] becomes a local Lie group for the multiplication \[(r,g)\cdot (s,h) = (r+s+\Omega_y(g,h),gh).\] Moreover, the Lie algebra of $\widetilde{G}_y$ is isomorphic to $\su_{ext}^{(y)}(2)$.
\end{Prop}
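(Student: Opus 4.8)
The plan is to verify Proposition~\ref{PropBundle} in three stages: first checking that the multiplication defines a local group structure, then identifying the associated Lie algebra. The heart of the matter is the cocycle identity for $\Omega_y$, since associativity of the product $(r,g)\cdot(s,h) = (r+s+\Omega_y(g,h),gh)$ is equivalent to the requirement that, whenever the relevant products stay inside $G_y$,
\begin{equation}\label{EqCocycle}
\Omega_y(g,h)+\Omega_y(gh,k) = \Omega_y(h,k)+\Omega_y(g,hk).
\end{equation}
First I would observe that, writing $\Phi_y(g) = \Arg(a(g))$, the function $\Omega_y$ is (up to the factor $\tfrac{1}{y}$) a coboundary of $\Phi_y$, namely $y\,\Omega_y(g,h) = \Phi_y(gh)-\Phi_y(g)-\Phi_y(h) \pmod{2\pi}$. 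Any coboundary automatically satisfies the additive cocycle identity, so the real content is purely that the $\Arg$ corrections do not force a jump of $2\pi$: the restriction $-\tfrac{\pi}{3}<\Arg(a)<\tfrac{\pi}{3}$ is chosen precisely so that the three arguments appearing in a triple product stay within a range where $\Arg$ is additive without branch ambiguity. This is why one works with a \emph{local} group rather than a global one, and it is the step I expect to require the most care.

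Second, I would check the remaining local group axioms. The identity element is $(0,e)$ since $\Omega_y(e,h) = \Omega_y(g,e) = \tfrac{1}{y}\Arg(1) = 0$, and the normalization $a(e)=1$ gives $\Arg(a(e))=0$, consistent with the constraint on $G_y$. For inverses, given $(r,g)$ one takes $(-r-\Omega_y(g,g^{-1}),g^{-1})$ whenever $g^{-1}\in G_y$, which holds near the identity. Smoothness of the multiplication follows from smoothness of $\Arg$ on $\C\setminus\R^-$ together with the fact that $a(g)$ stays in the right half-plane on $G_y$; the continuity of $\Omega_y$ in $y$, including the stated limit $\Omega_0 = \lim_{y\to 0}\Omega_y$, is a routine Taylor expansion of $\Arg(1+\varepsilon)\approx\Imm(\varepsilon)$ applied to $\tfrac{a(gh)}{a(g)a(h)} = 1 + y\,\tfrac{b(g)\overline{b(h)}}{a(g)a(h)} + O(y^2)$, using that $|a|^2+y|b|^2=1$.

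Third and finally, I would compute the Lie algebra of $\widetilde{G}_y$ and identify it with $\su_{\ext}^{(y)}(2)$. The tangent space at $(0,e)$ splits as $\R\oplus T_e G_y$; the factor $T_e G_y$ carries the bracket of $G_y$, which is the Lie algebra of $SU(2,y)$ generated by $J_1^{(y)},J_2^{(y)},J_3^{(y)}$ with the deformed relations recalled in Section~1, while the extra $\R$ direction gives the central generator $J_0$. The only nontrivial point is that the central extension is genuinely nontrivial for the $\lbrack J_2^{(y)},J_3^{(y)}\rbrack$ bracket: I would extract this by differentiating $\Omega_y$ twice, i.e.~computing the antisymmetric second-order term of $\Omega_y(\exp(sX),\exp(tZ))$ at $s=t=0$ for $X,Z\in\{J_i^{(y)}\}$. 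This Hessian of the cocycle $\Omega_y$ is exactly the $2$-cocycle on $\su(2,y)$ defining the central extension, and one checks it picks up the term $2J_0$ precisely in the $(J_2^{(y)},J_3^{(y)})$ slot, reproducing $\lbrack J_2^{(y)},J_3^{(y)}\rbrack = 2yJ_1^{(y)}+2J_0$ from the Lemma; all other brackets acquire no $J_0$-component because the corresponding second derivatives of $\Omega_y$ vanish. The factor of $\tfrac{1}{y}$ in $\Omega_y$ is what keeps this central term finite and nonzero in the limit $y\to 0$, which is the whole reason for its presence.
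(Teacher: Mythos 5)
Your proposal is correct, and it differs from the paper's proof in an interesting way on both halves of the statement. For the local-group axioms the paper simply says ``we leave it as an easy exercise,'' whereas you actually supply the argument: the observation that $y\,\Omega_y(g,h)=\Arg(a(gh))-\Arg(a(g))-\Arg(a(h))$ holds \emph{exactly} (not just mod $2\pi$) because the bound $-\tfrac{\pi}{3}<\Arg(a)<\tfrac{\pi}{3}$ keeps the total argument of $a(gh)/(a(g)a(h))$ strictly inside $(-\pi,\pi)$, so that the coboundary structure gives associativity for free, is exactly the right point and is the content the paper suppresses. (Two small touch-ups: the identity $a(gh)=a(g)a(h)-y\,b(g)\overline{b(h)}$ shows your expansion is exact with a minus sign, $a(gh)/(a(g)a(h))=1-y\,b(g)\overline{b(h)}/(a(g)a(h))$, rather than $1+y(\cdots)+O(y^2)$; and associativity at $y=0$ should be stated as following by continuity from $\Omega_0=\lim_{y\to 0}\Omega_y$, or checked directly, since the coboundary argument literally divides by $y$.) For the Lie algebra, the paper takes the opposite route from yours: it writes down explicit one-parameter subgroups $\phi_i^{(y)}(t)$ of $\widetilde{G}_y$, reads off the tangent vectors $J_i^{(y)}$, and extracts the only nontrivial bracket from the group-commutator limit $\lim_{s\to 0}\tfrac1s\bigl(\phi_2^{(y)}(-\sqrt{s})\phi_3^{(y)}(-\sqrt{s})\phi_2^{(y)}(\sqrt{s})\phi_3^{(y)}(\sqrt{s})-(0,I_2)\bigr)=2yJ_1^{(y)}+2J_0^{(y)}$, whereas you invoke the general dictionary between group $2$-cocycles and Lie-algebra $2$-cocycles and compute the antisymmetrized Hessian of $\Omega_y$ at the identity. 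Both are valid; your method isolates cleanly \emph{why} only the $(J_2^{(y)},J_3^{(y)})$ slot acquires a central term (one checks, e.g., $\Omega_y(\phi_2^{(y)}(s),\phi_3^{(y)}(t))-\Omega_y(\phi_3^{(y)}(t),\phi_2^{(y)}(s))=2st+o(st)$, while the mixed second derivatives vanish in the other slots), and it also explains conceptually that for $y\neq 0$ the extension splits via $\tfrac1y\Arg(a(\cdot))$, consistent with the Lemma's isomorphism $\su_{\ext}^{(y)}(2)\cong\su(2,y)\oplus\R$, the splitting degenerating as $y\to 0$. The paper's method has the advantage of treating $y=0$ uniformly as a limit of the explicit formulas and of producing the concrete sections $\phi_i^{(y)}$ that are reused later in the construction of $\lambda_y$ and the transversal vector field; to fully match the paper you would still need to carry out the Hessian computation you describe rather than assert it.
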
 

\begin{proof} We leave it as an easy exercise to the reader to verify  that $\widetilde{G}_y$ is a local Lie group. We only check here that it has the correct Lie algebra. In the following, the case $y=0$ is always seen as a limit $y\rightarrow 0$.

Consider the following one-parametergroups, where $y^{1/2} = i|y|^{1/2}$ for $y<0$, \begin{eqnarray*}\phi_0^{(y)}(t) &=& (t,I_2),\\ \phi_1^{(y)}(t) &=&  (0,\begin{pmatrix} e^{it} & 0 \\ 0 & e^{-it}\end{pmatrix}),\\ \phi_2^{(y)}(t) &=& (0, \begin{pmatrix} \cos(y^{1/2}t) & y^{-1/2}\sin(y^{1/2}t) \\ -y^{1/2}\sin(y^{1/2}t) & \cos(y^{1/2}t)\end{pmatrix})\\
\phi_3^{(y)}(t) &=& (0, \begin{pmatrix} \cos(y^{1/2}t) & iy^{-1/2}\sin(y^{1/2}t) \\ iy^{1/2}\sin(y^{1/2}t) & \cos(y^{1/2}t)\end{pmatrix}).\\
\end{eqnarray*} Then the associated tangent vectors are 
\[ J_0^{(y)} = (1,0),\quad J_1^{(y)} =(0, \begin{pmatrix} i & 0 \\0 & -i\end{pmatrix}),\quad J_2^{(y)} = (0,\begin{pmatrix} 0 & 1 \\ -y & 0\end{pmatrix}),\quad  J_3^{(y)} = (0,\begin{pmatrix} 0 & i\\ iy& 0 \end{pmatrix}).\] It is clear that $J_0^{(y)}$ is central, and that \[\lbrack J_1^{(y)},J_2^{(y)}\rbrack = 2J_3^{(y)},\quad \lbrack J_3^{(y)},J_1^{(y)} \rbrack = 2J_2^{(y)}.\] Finally, we compute 
\begin{eqnarray*} \lbrack J_2^{(y)},J_3^{(y)}\rbrack &=& \lim_{s\rightarrow 0} \frac{1}{s}(\phi_2^{(y)}(-\sqrt{s})\phi_3^{(y)}(-\sqrt{s})\phi_2^{(y)}(\sqrt{s})\phi_3^{(y)}(\sqrt{s}) - (0,I_2)) \\ 
&=& 2yJ_1^{(y)}+2J_0^{(y)}.
\end{eqnarray*}
 
\end{proof} 

\begin{Cor} Consider \[A_{\R} = \{(y,g)\mid g\in \widetilde{G}_y\}\subseteq \R\times SL(2,\C).\] Then $A_{\R}$ is a bundle of local Lie groups with associated Lie algebroid $\mathfrak{A}_{\R}$. 
\end{Cor}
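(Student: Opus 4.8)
The plan is to realize this corollary as the fibered version of Proposition~\ref{PropBundle}: the total space $A_{\R}$ is a bundle over $\R$ whose fiber at $y$ is the local Lie group $\widetilde{G}_y$, with source and target both equal to the projection $\pi\colon (y,(r,g))\mapsto y$ and with fiberwise multiplication. Concretely I view $A_{\R}$ as sitting inside $\R\times\R\times SL(2,\C)$ via $(y,(r,g))$, the extra $\R$ recording the central coordinate $r$ of $\widetilde{G}_y$. There are then two things to establish: first, that $A_{\R}$ together with these structure maps is genuinely a bundle of local Lie groups, i.e.\ a smooth manifold on which all the structure maps are smooth jointly in $y$; and second, that the Lie algebroid extracted from it by $A=\ker(\rd\pi)|_{\R}$, $\rho=\rd\pi|_A$ is exactly $\mathfrak{A}_{\R}$.

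For the manifold structure, I would note that the fiber $G_y$ is cut out inside $SL(2,\C)$ by $c(g)=-y\overline{b(g)}$, $d(g)=\overline{a(g)}$ and the single real equation $|a(g)|^2+y|b(g)|^2=1$. On the locus where $\Arg(a)\in(-\tfrac{\pi}{3},\tfrac{\pi}{3})$ one has $\Ree(a)>0$, so in particular $a\neq 0$ and the differential of $F(y,a,b)=|a|^2+y|b|^2-1$ is nonvanishing; hence $\{(y,a,b)\mid F=0,\ \Arg(a)\in(-\tfrac\pi3,\tfrac\pi3)\}$ is a smooth submanifold of $\R\times\C\times\C$, and adjoining the free coordinate $r\in\R$ makes $A_{\R}$ a smooth manifold. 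The projection $\pi$ is a submersion, and the units $(y,(0,I_2))$ form a smooth section.

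The crux is the smoothness of the partially defined multiplication, which reduces to the smoothness of $(y,g,h)\mapsto\Omega_y(g,h)$ jointly in $y$, and in particular \emph{across} the singular value $y=0$. For $g,h\in G_y$ a direct computation gives $a(gh)=a(g)a(h)-y\,b(g)\overline{b(h)}$, so that with $w=b(g)\overline{b(h)}/(a(g)a(h))$ one has $a(gh)/(a(g)a(h))=1-yw$ and therefore $\Omega_y(g,h)=\tfrac1y\Arg(1-yw)=\Imm\!\big(\tfrac1y\log(1-yw)\big)$. Now $\tfrac1y\log(1-yw)=-\sum_{n\ge1}\tfrac{y^{n-1}}{n}w^n$ is real-analytic in $(y,w)$ on $\{|yw|<1\}$, so its imaginary part is smooth there and reproduces $\Omega_0=\Imm(-w)$ at $y=0$; the restriction $\Arg(a)\in(-\tfrac\pi3,\tfrac\pi3)$ keeps $1-yw$ in the principal domain $\C\setminus\R^-$ on the composable locus $G_y^{(2)}$, so the principal branch of $\Arg$ is the correct smooth choice. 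Since $w$ is a smooth function of $(g,h)$ wherever $a(g),a(h)\neq0$, this is where the removable-singularity behaviour of the cocycle is tamed, and I expect this joint smoothness at $y=0$ to be the main obstacle; the remaining groupoid axioms (associativity, units, inverses) then hold fiberwise by Proposition~\ref{PropBundle}.

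Finally, to identify the Lie algebroid, I would observe that since $s=t=\pi$, the anchor $\rho=\rd\pi|_{\ker\rd\pi}$ vanishes, matching the trivial anchor of $\mathfrak{A}_{\R}$. For the bracket, the one-parameter groups $\phi_i^{(y)}$ of Proposition~\ref{PropBundle} depend smoothly on $y$ — each of $\cos(y^{1/2}t)$, $y^{-1/2}\sin(y^{1/2}t)$ and $y^{1/2}\sin(y^{1/2}t)$ is an even, hence smooth (indeed real-analytic), function of $y$, valid also for $y\le 0$ under the convention $y^{1/2}=i|y|^{1/2}$ — so the $J_i^{(y)}$ assemble into smooth sections $J_i$ trivializing $A=\ker(\rd\pi)|_{\R}$. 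The Lie-algebroid bracket of these sections is the fiberwise Lie bracket of $\mathrm{Lie}(\widetilde{G}_y)$, which Proposition~\ref{PropBundle} computed to be the structure constants of $\su_{\ext}^{(y)}(2)$; these are exactly the defining brackets of the bundle of Lie algebras $\mathfrak{A}_{\R}$, completing the identification.
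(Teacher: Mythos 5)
Your proposal is correct, and on the point the paper actually argues it follows the same route: the paper's entire proof of this corollary is the observation that $(y,t)\mapsto \phi_i^{(y)}(t)$ is smooth jointly in $(y,t)$ (your remark that $\cos(y^{1/2}t)$ and $y^{\pm1/2}\sin(y^{1/2}t)$ are even in $y^{1/2}$, hence real-analytic in $y$, is exactly the justification), so that infinitesimal left multiplication along these one-parameter fields gives smooth left-invariant vector fields on $A_{\R}$ whose restrictions to the unit section are the sections $J_i$, with fiberwise brackets already computed in Proposition \ref{PropBundle}; your identification of the trivial anchor and of the bracket with that of $\mathfrak{A}_{\R}$ matches this. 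What you add, and what the paper silently omits (having already left the local-Lie-group verification as an exercise in Proposition \ref{PropBundle}), is the one genuinely delicate point: joint smoothness of the multiplication, i.e.\ of the cocycle $\Omega_y(g,h)$, across $y=0$. Your computation $a(gh)=a(g)a(h)-y\,b(g)\overline{b(h)}$, hence $\Omega_y=\tfrac1y\Arg(1-yw)$ with $w=b(g)\overline{b(h)}/(a(g)a(h))$, together with the expansion $\tfrac1y\log(1-yw)=-\sum_{n\geq1}\tfrac{y^{n-1}}{n}w^n$, is the right argument, and the constraint $\Arg(a)\in(-\pi/3,\pi/3)$ on $g$, $h$ and $gh$ does pin $1-yw$ into $\C\setminus\R^-$ with the principal branch being the correct one. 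The only cosmetic caveat is that the series argument applies on $\{|yw|<1\}$, so it should be invoked only in a neighbourhood of $y=0$, with smoothness for $y\neq0$ read off directly from the closed formula; this is implicit in your phrasing. In short, your write-up establishes strictly more than the paper's proof records, by essentially the same method.
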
 
\begin{proof} The only thing which has to be observed is that the applications $(y,t) \rightarrow \phi_i^{(y)}(t)$ in the previous proof are smooth. Hence infinitesimal left multiplication with respect to these one-parameter-fields give left-invariant smooth vector fields on $A_{\R}$, and the same argument as in the previous proof shows that their restrictions to $\R$ give the Lie algebroid $\mathfrak{A}_{\R}$.

\end{proof}

Finally, we give a formula for the Poisson bivector on $A_{\R}$. Again, we will only succeed in giving a nice expression locally in a neighbourhood of the point $(0,e_{\widetilde{G}_0})$, the reason being that there does not seem to be a canonical global \emph{transversal} vector field that one can use. More precisely, let us restrict our object manifold $\R$ to an open interval of the form $(-M,M)$ for some $M>0$, and consider the restricted bundle $A_{(-M,M)}$. Then we can find an open neighborhood $U_{(-M,M)}$ around $\id_{(-M,M)}$ such that the following holds: for each $y\in (-M,M)$, the map \[\lambda_y: U_0\rightarrow U_y, \quad g=g_0\mapsto g_y\] is a diffeomorphism, where, using the notation from the proof of Proposition \ref{PropBundle},
\[\lambda_y\left(\phi_0^{(0)}(r)\phi_1^{(0)}(s)\phi_2^{(0)}(u)\phi_3^{(0)}(v)\right) = \phi_0^{(y)}(r)\phi_1^{(y)}(s)\phi_2^{(y)}(u)\phi_3^{(y)}(v).\] It then follows that $y\mapsto g_y$ is a section of $U_{(-M,M)}$ for each $g\in U^{(0)}\subseteq \widetilde{G}^{(0)}$, and we can define the transversal vector field $\frac{\partial}{\partial y}$ in $(y,g_y)$ as the tangent vector at zero to the path $t \mapsto (y+t,g_{y+t})$. 

\begin{Theorem} The bialgebroid structure on $\mathfrak{A}_{\R}$ integrates to a (local) Poisson groupoid structure on the bundle $U_{(-M,M)}$ by means of the Poisson bivector \[\Pi(y,r,\begin{pmatrix} a & b\\ -y\bar{b} & \bar{a}\end{pmatrix}) = 2J_0^{(y)}\wedge \frac{\partial}{\partial y}   + J_1^{(y)}\wedge (\Ree(ab)J_2^{(y)}+\Imm(ab) J_3^{(y)}) +|b|^2 J_2^{(y)}\wedge J_3^{(y)}.\]
\end{Theorem}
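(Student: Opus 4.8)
The plan is to verify that the stated bivector $\Pi$ integrates the Lie bialgebroid $\mathfrak{A}_{\R}$ by checking the two defining properties of a (local) Poisson groupoid: that $\Pi$ is a Poisson bivector, and that it satisfies the coisotropy/multiplicativity condition, after which one reads off the associated cobracket and matches it against $\delta$ from Remark \ref{RemFormdelta}. I would structure the argument around the linearisation criterion: the infinitesimal object of a Poisson groupoid is its Lie bialgebroid, so the cleanest route is to compute the cobracket $\delta$ determined by $\Pi$ via the formulas $\delta(f)(dg)=2\{g,f\}$ and $\delta(X)(df,dg)=X(\{f,g\})-\{Xf,g\}-\{f,Xg\}$, restricted to left-invariant objects, and confirm it reproduces $\delta(J_1)=0$, $\delta(J_2)=J_1\wedge J_2$, $\delta(J_3)=J_1\wedge J_3$, $\delta(J_0)=0$, together with $\delta(Y)=2J_0$.

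First I would set up left-invariant vector fields on $U_{(-M,M)}$ extending the tangent vectors $J_0^{(y)},\ldots,J_3^{(y)}$ from Proposition \ref{PropBundle}, using the one-parameter groups $\phi_i^{(y)}$; the transversal field $\frac{\partial}{\partial y}$ is defined exactly as in the paragraph preceding the statement, via the sections $y\mapsto g_y$ built from the diffeomorphisms $\lambda_y$. The term $2J_0^{(y)}\wedge\frac{\partial}{\partial y}$ is the piece responsible for the transversal (non-group) direction, and it is precisely this term that should produce $\delta(Y)=2J_0$ upon dualising, mirroring the computation $\lbrack \hat{J}_0,\Pi\rbrack = 2a^{-2}\hat{J}_2\wedge\hat{J}_3$ in the proof of the first Theorem but now read in the opposite duality. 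The remaining terms $J_1^{(y)}\wedge(\Ree(ab)J_2^{(y)}+\Imm(ab)J_3^{(y)}) + |b|^2 J_2^{(y)}\wedge J_3^{(y)}$ are the genuinely group-theoretic part, whose linearisation at the identity $(0,e_{\widetilde{G}_0})$ I would compute by differentiating the coefficient functions $\Ree(ab)$, $\Imm(ab)$, $|b|^2$ along the generators; at $b=0$, $a=1$ these coefficients and their relevant derivatives give exactly the structure matching $\delta(J_2)=J_1\wedge J_2$ and $\delta(J_3)=J_1\wedge J_3$, with $|b|^2$ vanishing to second order so as not to contribute at the linear level.

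For the Poisson property I would expand the Schouten--Nijenhuis bracket $\lbrack\Pi,\Pi\rbrack$ and exploit the fact, as in the earlier theorem, that $\Pi$ contains the transversal direction only through the single wedge $2J_0^{(y)}\wedge\frac{\partial}{\partial y}$ with $J_0^{(y)}$ central; the cross-terms coupling this piece to the group part should collapse because $J_0$ is central and the coefficient of the transversal term is constant in the group variables. The multiplicativity condition is most transparently verified by checking that $\Pi$ vanishes at $\id_{(-M,M)}$ and satisfies the bitranslation identity analogous to $\Pi_{x,z}(gh)=\Pi_{x,y}(g)h+g\Pi_{y,z}(h)$, now adapted to the twisted multiplication $(r,g)\cdot(s,h)=(r+s+\Omega_y(g,h),gh)$ of $\widetilde{G}_y$; here the cocycle $\Omega_y$ enters, and one must check that the contributions of $\Omega_y$ to left and right translation are compatible with the $J_0\wedge\frac{\partial}{\partial y}$ term.

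The main obstacle I expect is precisely the interaction of the transversal field $\frac{\partial}{\partial y}$ with the twisted group multiplication. Because $\frac{\partial}{\partial y}$ is defined only locally through the non-canonical sections $g\mapsto g_y$, it is not left-invariant in any intrinsic sense, and its Lie brackets with the left-invariant fields $J_i^{(y)}$ — which are exactly what feed into $\lbrack\Pi,\Pi\rbrack$ and into the linearisation of the $J_0\wedge\frac{\partial}{\partial y}$ term — require careful bookkeeping of how $\lambda_y$ intertwines the $\phi_i$ across different fibres. This is the semi-classical shadow of the analytic subtlety the introduction attributes to the cogroupoid picture, and it is the reason the statement is only local; I would isolate this computation, verify that the anomaly in $\lbrack J_0,\frac{\partial}{\partial y}\rbrack$-type terms is consistent with the central extension relation $\lbrack J_2^{(y)},J_3^{(y)}\rbrack=2yJ_1^{(y)}+2J_0$, and treat the purely group-directional Poisson identities as routine by comparison with the standard Poisson--Lie structure on $B$.
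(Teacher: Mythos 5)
Your plan inverts the logic of the paper's proof, and in doing so it leaves the genuinely hard steps undone. The paper does not verify that the displayed bivector is Poisson and multiplicative; it invokes the integration theorem of Mackenzie--Xu \cite{MX00} to know a priori that the Lie bialgebroid $\mathfrak{A}_{(-M,M)}$ integrates to a Poisson groupoid structure on $U_{(-M,M)}$, and the entire content of the proof is then to \emph{derive} the formula for $\Pi$: coisotropy of the identity section together with $\lbrack p^*f,\Pi\rbrack=\delta(f)$ pins down the transversal term $2J_0^{(y)}\wedge\frac{\partial}{\partial y}$ and the value of $\Pi$ on $\id_{(-M,M)}$; the multiplicative cocycle identity restricted to the one-parameter subgroups $\phi_i^{(y)}$ reduces, via the standard Poisson--Lie integration formula of \cite[Section 2.3.1]{ES02}, to an ODE whose solution produces the coefficients $y^{-1/2}\sin(y^{1/2}t)\cos(y^{1/2}t)$ and $(y^{-1/2}\sin(y^{1/2}t))^2$; and multiplicativity on products $\phi_0^{(y)}(t)\phi_1^{(y)}(s)\phi_2^{(y)}(u)\phi_3^{(y)}(v)$ then assembles these into the coefficients $\Ree(ab)$, $\Imm(ab)$, $|b|^2$. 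None of this derivation appears in your proposal.

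Your alternative --- verify directly that the stated $\Pi$ satisfies $\lbrack\Pi,\Pi\rbrack=0$ and the coisotropy of $\graph(\mult)$, then check its linearisation --- would in principle also prove the theorem, and your linearisation check at the identity is correct as far as it goes (though it only confirms that the stated $\Pi$, \emph{if} it is a multiplicative Poisson bivector, has the right infinitesimal). But the two steps you defer are exactly where the difficulty lives, and your proposed reasons for why they work are not sound as stated. For the Jacobi identity, centrality of $J_0$ kills only half of the cross-term $\lbrack 2J_0^{(y)}\wedge\frac{\partial}{\partial y},\widetilde{\Pi}\rbrack$; the other half involves $\lbrack\frac{\partial}{\partial y},\widetilde{\Pi}\rbrack$, and since both the frame $J_i^{(y)}$ and the coefficient functions depend on $y$ through the non-canonical sections $g\mapsto g_y$, this bracket is not obviously zero and ``should collapse'' is not an argument. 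Likewise, the bitranslation identity for the twisted product $(r,g)\cdot(s,h)=(r+s+\Omega_y(g,h),gh)$ requires controlling how the cocycle $\Omega_y$ enters the translation of the $J_0^{(y)}\wedge\frac{\partial}{\partial y}$ term, which you flag as the main obstacle but do not resolve. As written, the proposal identifies the obstacles rather than overcoming them; to complete it along your route you would have to carry out those two computations explicitly, whereas the paper's route makes them unnecessary by outsourcing existence to the general integration theorem and reducing the proof to a determination of the unique multiplicative bivector with the prescribed linearisation.
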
 

Here we have identified $\su_{\ext}^{(y)}(2)$ as left invariant vector fields on $\widetilde{G}_y$. We also recall that $J_0^{(y)} = \frac{\partial}{\partial r}$. 

\begin{proof} By general theory \cite{MX00}, we know that the bialgebroid structure on $\mathfrak{A}_{(-M,M)}$ will integrate to a Poisson structure on $U_{(-M,M)}$. Let $p: U_{(-M,M)}\rightarrow \R$ be the projection map. Then, from the discussion in \cite[Section 3.2]{L-GSX11}, we know that $\lbrack p^*f,\Pi\rbrack$ has to be left invariant and equal to $\delta(f)$ for $f\in C^{\infty}(-M,M)$. We can hence conclude from Remark \ref{RemFormdelta} that \[\Pi(y,g) = 2\frac{\partial}{\partial r}\wedge \frac{\partial}{\partial y}   + \widetilde{\Pi}(y,g),\] with $\widetilde{\Pi}(y,g)\in \Lambda^2T_{g}(\widetilde{G}_y)$. In particular, since $\id_{(-M,M)} \subseteq U_{(-M,M)}$ via $\id: y \mapsto (y,e_{\widetilde{G}_y})$ is coisotropic, we conclude that $\Pi(y,e_{\widetilde{G}_y}) =   2\frac{\partial}{\partial r}\wedge \frac{\partial}{\partial y}$.

Now from the multiplicative formula \cite[Theorem 3.8.(1)]{L-GSX11} for $\Pi$, we get that for $\mathbf{g},\mathbf{h}$ the canonical sections $y\mapsto g_y$ and $y\mapsto h_y$ for $g,h\in U_{(-M,M)}$ with $gh\in U_{(-M,M)}$, we have  \[\Pi(g_yh_y) = \mathbf{g}\Pi(h_y) + \Pi(g_y)\mathbf{h} - \mathbf{g}\Pi(y,e_{\widetilde{G}_y})\mathbf{h}.\] 

Consider now again the one-parametergroups $\phi^{(y)}_i(t)$ appearing in the proof of Proposition \ref{PropBundle}. By definition we have that $\phi_i^{(y)}(t) = \phi_i(t)_y$, where $\phi_i(t) = \phi^{(0)}_i(t)$. This implies that \[\left(\frac{\partial}{\partial y}\right)_{\mid (y,\phi_i^{(y)}(s))} \cdot \boldsymbol{\phi}_i(t) = \boldsymbol{\phi}_i(t) \cdot \left(\frac{\partial}{\partial y}\right)_{\mid (y,\phi_i^{(y)}(s))}  =  \left(\frac{\partial}{\partial y}\right)_{\mid (y,\phi_i^{(y)}(s+t))}.\]

For these particular values, our multiplicative formula becomes \[\widetilde{\Pi}(\phi_i^{(y)}(t+s)) = \widetilde{\Pi}(\phi_i^{(y)})(t) \phi_i^{(y)}(s) + \phi_i^{(y)}(t)\widetilde{\Pi}(\phi_i^{(y)}(s)).\] Hence we can apply the general method and formulas as in \cite[Section 2.3.1]{ES02} to conclude that \begin{eqnarray*} \widetilde{\Pi}(y,\phi_1^{(y)}(t)) &=& 0, \\ \widetilde{\Pi}(y,\phi_2^{(y)}(t))  &=& y^{-1/2}\sin(y^{1/2}t) \cos(y^{1/2}t) J_1^{(y)}\wedge J_2^{(y)} + (y^{-1/2}\sin(y^{1/2}t))^2 J_2^{(y)} \wedge J_3^{(y)}  \\  \widetilde{\Pi}(y,\phi_3^{(y)}(t))  &=& y^{-1/2}\sin(y^{1/2}t) \cos(y^{1/2}t) J_1^{(y)}\wedge J_3^{(y)} + (y^{-1/2}\sin(y^{1/2}t))^2 J_2^{(y)} \wedge J_3^{(y)} \\  \widetilde{\Pi}(y,\phi_4^{(y)}(t))  &=&  0.\end{eqnarray*}

As one has \[\phi_0^{(y)}(t)\phi_1^{(y)}(s)\phi_2^{(y)}(u)\phi_3^{(y)}(v)= \left(\phi_0(t)\phi_1(s)\phi_2(u)\phi_3(v)\right)_y\] by definition as well, we obtain that one can apply the ordinary multiplicative formula for $\widetilde{\Pi}$ on elements of the form $\phi_0^{(y)}(t)\phi_1^{(y)}(s)\phi_2^{(y)}(u) \phi_3^{(y)}(v)$. An easy computation then reveals that $\Pi$ has the form as in the statement of the theorem. 
\end{proof} 

\begin{Cor} Write \[(y,r,a,b,c,d):U_{(-M,M)} \rightarrow \C^6,\quad \left(y,(r,\begin{pmatrix} a & b \\ -y\bar{b} & \bar{a}\end{pmatrix})\right) \mapsto (y,r,a,b,-\bar{b},\bar{a}).\] Then we have the following Poisson brackets: \[ \{a,b\} = iab, \quad \{a,c\} = iac,\quad  \{a,d\} = 2iybc,\quad \{b,c\} = 0, \quad \{b,d\} = ibd,\quad \{c,d\} = icd,  \]
 \[\{r,-\} = 2\frac{\partial}{\partial y},\qquad \{y,-\} = -2\frac{\partial}{\partial r}.\]
\end{Cor}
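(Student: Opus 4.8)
The plan is to derive the stated Poisson brackets directly from the explicit Poisson bivector $\Pi$ given in the preceding theorem, by pairing it against the differentials of the coordinate functions $a,b,c,d$ (together with $r$ and $y$). Since these functions are the matrix entries of the $SL(2,\C)$-representation, and the bivector is expressed in terms of the left-invariant vector fields $J_i^{(y)}$ and the transversal field $\frac{\partial}{\partial y}$, the first task is to compute how each $J_i^{(y)}$ and $\frac{\partial}{\partial y}$ act on the functions $a,b,c,d$. Concretely, $J_i^{(y)}$ acts as a left-invariant derivation, so $(J_i^{(y)}f)(g) = \frac{\rd}{\rd t}\big|_{t=0} f(g\,\phi_i^{(y)}(t))$, using the one-parameter groups $\phi_i^{(y)}$ from Proposition \ref{PropBundle}; this reduces each action to differentiating the entries of a matrix product at $t=0$.

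First I would record these derivatives explicitly. Writing $g = \begin{pmatrix} a & b \\ -y\bar b & \bar a\end{pmatrix}$, differentiating $g\,\phi_1^{(y)}(t)$ gives $J_1^{(y)}a = ia$, $J_1^{(y)}b = -ib$, and similarly for $c,d$; differentiating against $\phi_2^{(y)}$ and $\phi_3^{(y)}$ gives the action of $J_2^{(y)}$ and $J_3^{(y)}$ on the entries, where the $y$-dependence enters through the lower-left entries and the factors $\cos(y^{1/2}t)$, $y^{-1/2}\sin(y^{1/2}t)$. The transversal field $\frac{\partial}{\partial y}$ acts trivially on $a,b$ (these are free coordinates on the fiber, unchanged along the section $y\mapsto g_y$ to the order that matters at the identity-normalised point) but must be handled with the convention fixed before the theorem, namely as the tangent to $t\mapsto(y+t, g_{y+t})$; I would note that since $c = -y\bar b$ and $d = \bar a$ are determined algebraically, their brackets follow once those of $a,b$ are known. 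Then, for each pair of coordinate functions $F,G$, the Poisson bracket is $\{F,G\} = \Pi(\rd F \wedge \rd G)$, which expands as a sum of terms $\pm (XF)(YG)$ running over the wedge factors $X\wedge Y$ appearing in $\Pi$.

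The bulk of the verification is then bookkeeping: for $\{a,b\}$ only the term $J_1^{(y)}\wedge(\Ree(ab)J_2^{(y)}+\Imm(ab)J_3^{(y)})$ and the $|b|^2 J_2^{(y)}\wedge J_3^{(y)}$ term can contribute, and plugging in $J_1^{(y)}a = ia$ etc. should collapse (after using $\Ree(ab)J_2^{(y)}b + \Imm(ab)J_3^{(y)}b$ and the relation $ab\cdot\bar{(\,\cdot\,)}$) to $iab$; the analogous computations give $\{a,c\}=iac$, $\{a,d\}=2iybc$, $\{b,c\}=0$, $\{b,d\}=ibd$, $\{c,d\}=icd$. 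The brackets with $r$ and $y$ come directly from the $2J_0^{(y)}\wedge\frac{\partial}{\partial y}$ term together with the identification $J_0^{(y)} = \frac{\partial}{\partial r}$, giving $\{r,-\} = 2\frac{\partial}{\partial y}$ and $\{y,-\} = -2\frac{\partial}{\partial r}$ by antisymmetry of the wedge.

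The main obstacle I expect is not any single bracket but the careful treatment of the constraint among the coordinates and of the transversal direction: because $c$ and $d$ are not independent of $a,b,y$ on $G_y$, one must be consistent about whether one differentiates the free entries and then imposes the relations, or works with the ambient $\C^6$-valued map. In particular the appearance of the factor $y$ in $\{a,d\}=2iybc$ is the delicate point, as it arises from the $y$-weighting of the lower-left matrix entry in $\phi_2^{(y)},\phi_3^{(y)}$ (the $\cos/\sin$ expansions with $y^{1/2}$) combined with $c=-y\bar b$; getting the single power of $y$ and the constant $2i$ correct, rather than spuriously producing $y^{1/2}$ factors, is where I would slow down and double-check the limit $y\to 0$ against the expected contraction behaviour. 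Everything else is a routine, if slightly lengthy, substitution.
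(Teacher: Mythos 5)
Your overall strategy---reading the brackets off by pairing the explicit bivector of the preceding theorem against the differentials of the coordinate functions---is the right one, and essentially the only one available: the paper offers no proof of this corollary, treating it as an immediate computation of exactly this kind. However, three of your concrete steps would fail as written. First, the coordinate $c$ is \emph{not} the matrix entry $-y\bar b$: the displayed map sends the matrix to $(y,r,a,b,-\bar b,\bar a)$, so $c=-\bar b$ and the lower-left entry equals $yc$. This is not pedantry: with $c=-\bar b$ the determinant constraint reads $ad-ybc=1$ and one checks $\{ad-ybc,\cdot\}=0$ from the stated table, whereas with your reading $c=-y\bar b$ the Casimir would be $ad-bc$ and $\{ad-bc,a\}=-2iyabc+2iabc\neq 0$, so the table could not hold. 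Since you single out $\{a,d\}=2iybc$ as the delicate bracket, this misidentification is precisely where your computation would go wrong.

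Second, your stated convention $(J_i^{(y)}f)(g)=\frac{\rd}{\rd t}\big|_{t=0}f(g\,\phi_i^{(y)}(t))$, which gives $J_1^{(y)}b=-ib$, does not reproduce the table. With it, writing $W=\Ree(ab)J_2^{(y)}+\Imm(ab)J_3^{(y)}$, one finds $Wb=a^2b$ and $Wa=-yb\,\overline{ab}$, so the first wedge term contributes $ia^3b-iy\bar a\bar b\,b^2$ to $\{a,b\}$ and the term $|b|^2J_2^{(y)}\wedge J_3^{(y)}$ contributes $-2iy|b|^2ab$; already at $y=0$, $|a|=1$ this totals $ia^3b\neq iab$. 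The formulas are reproduced only if $J_i^{(y)}$ is realised at $g$ as the matrix $J_i^{(y)}g$, i.e. $(J_i^{(y)}f)(g)=\frac{\rd}{\rd t}\big|_{t=0}f(\phi_i^{(y)}(t)\,g)$, which gives $J_1^{(y)}a=ia$, $J_1^{(y)}b=ib$, $J_2^{(y)}a=yc$, $J_2^{(y)}b=d$, $J_3^{(y)}a=iyc$, $J_3^{(y)}b=id$; then the first term yields $iab(|a|^2+y|b|^2)=iab$ and the second yields $0$. You must fix the side of the invariance before the bookkeeping can succeed. Third, your claim that $\partial/\partial y$ annihilates $a$ and $b$ is false: the section $y\mapsto g_y$ is built from $\phi_2^{(y)},\phi_3^{(y)}$, whose matrix entries ($\cos(y^{1/2}u)$, etc.) depend on $y$, so $\partial a/\partial y\neq 0$---this is exactly why the paper remarks after the corollary that $\{r,a\}=2\frac{\partial a}{\partial y}$ has no nice closed form. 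This error happens not to contaminate the brackets among $a,b,c,d$, since $\partial/\partial y$ occurs only wedged with $\partial/\partial r$ and $a,b,c,d$ do not depend on $r$, but it would lead you to the false conclusion $\{r,a\}=0$.
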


 Note however that for example the bracket $\{r,a\} = 2 \frac{\partial a}{\partial y}$ does not have a particularly nice expression.

\bibliographystyle{habbrv}
\bibliography{RefPoisGroup}

\end{document}